\numberwithin{equation}{section}
\theoremstyle{plain}
\newtheorem{theorem}{Theorem}[section]
\newtheorem{proposition}[theorem]{Proposition}
\theoremstyle{definition}
\newtheorem{remark}[theorem]{Remark}
\begin{document}

\newcommand{\eq}{equation}
\newcommand{\real}{\ensuremath{\mathbb R}}
\newcommand{\comp}{\ensuremath{\mathbb C}}
\newcommand{\rn}{\ensuremath{{\mathbb R}^n}}
\newcommand{\tn}{\ensuremath{{\mathbb T}^n}}
\newcommand{\rnp}{\ensuremath{\real^n_+}}
\newcommand{\rnn}{\ensuremath{\real^n_-}}
\newcommand{\Rn}{\ensuremath{{\mathbb R}^{n-1}}}
\newcommand{\Zn}{\ensuremath{{\mathbb Z}^{n-1}}}
\newcommand{\no}{\ensuremath{\nat_0}}
\newcommand{\ganz}{\ensuremath{\mathbb Z}}
\newcommand{\zn}{\ensuremath{{\mathbb Z}^n}}
\newcommand{\zom}{\ensuremath{{\mathbb Z}_{\Om}}}
\newcommand{\zOm}{\ensuremath{{\mathbb Z}^{\Om}}}
\newcommand{\As}{\ensuremath{A^s_{p,q}}}
\newcommand{\Bs}{\ensuremath{B^s_{p,q}}}
\newcommand{\Fs}{\ensuremath{F^s_{p,q}}}
\newcommand{\Fsr}{\ensuremath{F^{s,\rloc}_{p,q}}}
\newcommand{\nat}{\ensuremath{\mathbb N}}
\newcommand{\Om}{\ensuremath{\Omega}}
\newcommand{\di}{\ensuremath{{\mathrm d}}}
\newcommand{\sn}{\ensuremath{{\mathbb S}^{n-1}}}
\newcommand{\Ac}{\ensuremath{\mathcal A}}
\newcommand{\Acs}{\ensuremath{\Ac^s_{p,q}}}
\newcommand{\Bc}{\ensuremath{\mathcal B}}
\newcommand{\Cc}{\ensuremath{\mathcal C}}
\newcommand{\cc}{{\scriptsize $\Cc$}${}^s (\rn)$}
\newcommand{\ccd}{{\scriptsize $\Cc$}${}^s (\rn, \delta)$}
\newcommand{\Fc}{\ensuremath{\mathcal F}}
\newcommand{\Lc}{\ensuremath{\mathcal L}}
\newcommand{\Mc}{\ensuremath{\mathcal M}}
\newcommand{\Ec}{\ensuremath{\mathcal E}}
\newcommand{\Pc}{\ensuremath{\mathcal P}}
\newcommand{\Efr}{\ensuremath{\mathfrak E}}
\newcommand{\Mfr}{\ensuremath{\mathfrak M}}
\newcommand{\Mbf}{\ensuremath{\mathbf M}}
\newcommand{\Dbb}{\ensuremath{\mathbb D}}
\newcommand{\Lbb}{\ensuremath{\mathbb L}}
\newcommand{\Pbb}{\ensuremath{\mathbb P}}
\newcommand{\Qbb}{\ensuremath{\mathbb Q}}
\newcommand{\Rbb}{\ensuremath{\mathbb R}}
\newcommand{\vp}{\ensuremath{\varphi}}
\newcommand{\hra}{\ensuremath{\hookrightarrow}}
\newcommand{\supp}{\ensuremath{\mathrm{supp \,}}}
\newcommand{\ssupp}{\ensuremath{\mathrm{sing \ supp\,}}}
\newcommand{\dist}{\ensuremath{\mathrm{dist \,}}}
\newcommand{\unif}{\ensuremath{\mathrm{unif}}}
\newcommand{\ve}{\ensuremath{\varepsilon}}
\newcommand{\vk}{\ensuremath{\varkappa}}
\newcommand{\vr}{\ensuremath{\varrho}}
\newcommand{\pa}{\ensuremath{\partial}}
\newcommand{\oa}{\ensuremath{\overline{a}}}
\newcommand{\ob}{\ensuremath{\overline{b}}}
\newcommand{\of}{\ensuremath{\overline{f}}}
\newcommand{\LA}{\ensuremath{L^r\!\As}}
\newcommand{\LcA}{\ensuremath{\Lc^{r}\!A^s_{p,q}}}
\newcommand{\LcdA}{\ensuremath{\Lc^{r}\!A^{s+d}_{p,q}}}
\newcommand{\LcB}{\ensuremath{\Lc^{r}\!B^s_{p,q}}}
\newcommand{\LcF}{\ensuremath{\Lc^{r}\!F^s_{p,q}}}
\newcommand{\Lf}{\ensuremath{L^r\!f^s_{p,q}}}
\newcommand{\La}{\ensuremath{\Lambda}}
\newcommand{\Lob}{\ensuremath{L^r \ob{}^s_{p,q}}}
\newcommand{\Lof}{\ensuremath{L^r \of{}^s_{p,q}}}
\newcommand{\Loa}{\ensuremath{L^r\, \oa{}^s_{p,q}}}
\newcommand{\Lcoa}{\ensuremath{\Lc^{r}\oa{}^s_{p,q}}}
\newcommand{\Lcob}{\ensuremath{\Lc^{r}\ob{}^s_{p,q}}}
\newcommand{\Lcof}{\ensuremath{\Lc^{r}\of{}^s_{p,q}}}
\newcommand{\Lca}{\ensuremath{\Lc^{r}\!a^s_{p,q}}}
\newcommand{\Lcb}{\ensuremath{\Lc^{r}\!b^s_{p,q}}}
\newcommand{\Lcf}{\ensuremath{\Lc^{r}\!f^s_{p,q}}}
\newcommand{\id}{\ensuremath{\mathrm{id}}}
\newcommand{\tr}{\ensuremath{\mathrm{tr\,}}}
\newcommand{\trd}{\ensuremath{\mathrm{tr}_d}}
\newcommand{\trL}{\ensuremath{\mathrm{tr}_L}}
\newcommand{\ext}{\ensuremath{\mathrm{ext}}}
\newcommand{\re}{\ensuremath{\mathrm{re\,}}}
\newcommand{\Rea}{\ensuremath{\mathrm{Re\,}}}
\newcommand{\Ima}{\ensuremath{\mathrm{Im\,}}}
\newcommand{\loc}{\ensuremath{\mathrm{loc}}}
\newcommand{\rloc}{\ensuremath{\mathrm{rloc}}}
\newcommand{\osc}{\ensuremath{\mathrm{osc}}}
\newcommand{\pr}{\pageref}
\newcommand{\wh}{\ensuremath{\widehat}}
\newcommand{\wt}{\ensuremath{\widetilde}}
\newcommand{\ol}{\ensuremath{\overline}}
\newcommand{\os}{\ensuremath{\overset}}
\newcommand{\Li}{\ensuremath{\overset{\circ}{L}}}
\newcommand{\Ai}{\ensuremath{\os{\, \circ}{A}}}
\newcommand{\Ci}{\ensuremath{\os{\circ}{\Cc}}}
\newcommand{\dom}{\ensuremath{\mathrm{dom \,}}}
\newcommand{\SA}{\ensuremath{S^r_{p,q} A}}
\newcommand{\SB}{\ensuremath{S^r_{p,q} B}}
\newcommand{\SF}{\ensuremath{S^r_{p,q} F}}
\newcommand{\Hc}{\ensuremath{\mathcal H}}
\newcommand{\Nc}{\ensuremath{\mathcal N}}
\newcommand{\Lci}{\ensuremath{\overset{\circ}{\Lc}}}
\newcommand{\bmo}{\ensuremath{\mathrm{bmo}}}
\newcommand{\BMO}{\ensuremath{\mathrm{BMO}}}
\newcommand{\cm}{\\[0.1cm]}
\newcommand{\Aa}{\ensuremath{\os{\, \ast}{A}}}
\newcommand{\Ba}{\ensuremath{\os{\, \ast}{B}}}
\newcommand{\Fa}{\ensuremath{\os{\, \ast}{F}}}
\newcommand{\Aas}{\ensuremath{\Aa{}^s_{p,q}}}
\newcommand{\Bas}{\ensuremath{\Ba{}^s_{p,q}}}
\newcommand{\Fas}{\ensuremath{\Fa{}^s_{p,q}}}
\newcommand{\Ca}{\ensuremath{\os{\, \ast}{{\mathcal C}}}}
\newcommand{\Cas}{\ensuremath{\Ca{}^s}}
\newcommand{\Car}{\ensuremath{\Ca{}^r}}
\newcommand{\bl}{$\blacksquare$}

\begin{center}
{\Large Mapping properties of Fourier transforms, II}
\\[1cm]
{Hans Triebel}
\\[0.2cm]
Institut f\"{u}r Mathematik\\
Friedrich--Schiller--Universit\"{a}t Jena\\
07737 Jena, Germany
\\[0.1cm]
email: hans.triebel@uni-jena.de
\end{center}

\begin{abstract}
This is the direct continuation of the paper \cite{T21} using the same notation as there without further explanations. It deal with
continuous and compact mappings of the Fourier transform $F$ between some weighted function spaces on \rn.
\end{abstract}

{\bfseries Keywords:} Fourier transform, weighted Besov spaces, entropy numbers

{\bfseries 2020 MSC:} Primary 46E35, Secondary 41A46, 47B06

\section{Basic properties}   \label{S1}
As already  mentioned in \cite[Problem 5.4]{T21} it is natural to deal with mapping properties of the Fourier transform $F$ in the context of the weighted spaces
\begin{\eq}  \label{1.1}
\Bs (\rn, w_\alpha), \qquad s\in \real \quad \text{and} \quad 0<p,q \le \infty,
\end{\eq}
as introduced in \cite[Definition 4.3]{T21}  where again
\begin{\eq}   \label{1.2}
w_\alpha (x) = (1 +|x|^2 )^{\alpha/2}, \qquad x\in \rn, \quad \alpha \in \real.
\end{\eq}
In addition to the isomorphic mapping  $f \mapsto w_\alpha f$,
\begin{\eq}   \label{1.3}
\| w_\alpha f \, | \Bs (\rn) \| \sim \|f \, | \Bs (\rn, w_\alpha) \|
\end{\eq}
according to \cite[(4.6)]{T21} the lifting
\begin{\eq}   \label{1.4}
\| (w_\alpha \wh{f} )^\vee | \Bs (\rn, w_\beta) \| \sim \| f \, | B^{s+\alpha}_{p,q} (\rn, w_\beta) \|, \qquad \alpha \in \real, \quad
\beta \in \real,
\end{\eq}
will be of some use for us, \cite[Theorem 6.5, pp.\,265--266]{T06} and the references given there. We concentrate again on
\begin{\eq}   \label{1.5}
B^s_p (\rn, w_\alpha) = B^s_{p,p} (\rn, w_\alpha), \qquad s\in \real, \quad \alpha \in \real, \quad 1<p<\infty,
\end{\eq}
and its special case
\begin{\eq}   \label{1.6}
H^s (\rn,  w_\alpha) = B^s_2 (\rn, w_\alpha) = B^s_{2,2} (\rn, w_\alpha), \qquad s\in \real, \quad \alpha \in \real.
\end{\eq}

\begin{proposition}   \label{P1.1}
Let $s\in \real$ and $\alpha \in \real$. Then the Fourier transform
\begin{\eq}   \label{1.7}
F: \quad H^s (\rn, w_\alpha) \hra H^\alpha (\rn, w_s)
\end{\eq}
is an isomorphic mapping,
\begin{\eq}   \label{1.8}
F H^s (\rn, w_\alpha) = H^\alpha (\rn, w_s).
\end{\eq}
\end{proposition}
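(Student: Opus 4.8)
The plan is to factorise $F$ so as to reduce \eqref{1.7}, \eqref{1.8} to the ``axis'' cases $s=0$ or $\alpha=0$, where the operators involved commute and everything is transparent. For $\gamma\in\real$ I write $M_\gamma f = w_\gamma f$ for multiplication by the weight and $I_\gamma f = (w_\gamma \wh f)^\vee$ for the associated lift; on $\mathcal S'(\rn)$ both are isomorphisms, with inverses $M_{-\gamma}$ and $I_{-\gamma}$, and $F$ is an isomorphism as well. Two elementary intertwining relations drive the argument. Since $\widehat{(w_\gamma \wh f)^\vee} = w_\gamma \wh f$, one obtains at once
\begin{equation}\label{P1.1A}
F I_\gamma = M_\gamma F, \qquad \gamma\in\real,
\end{equation}
and, using that $w_\gamma$ is even together with $(g(-\,\cdot))^\vee = \wh g$ and $\widehat{\wh f} = f(-\,\cdot)$, also
\begin{equation}\label{P1.1B}
F M_\gamma = I_\gamma F, \qquad \gamma\in\real.
\end{equation}
Relation \eqref{P1.1B} is the analytic heart of the matter: $F$ interchanges the weight $w_\gamma$ (acting by multiplication) with the smoothness lift $I_\gamma$, which is exactly the mechanism behind the swap $(s,\alpha)\mapsto(\alpha,s)$ in \eqref{1.7}.

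First I would settle the axis case $F\colon H^0(\rn,w_\alpha)\hra H^\alpha(\rn)$, where $H^0(\rn,w_\alpha)=L_2(\rn,w_\alpha)$ carries the norm $\|w_\alpha f\,|\,L_2\|$. For such $f$ one computes, using \eqref{1.4} with $\beta=0$ (so that $\|I_\alpha g\,|\,L_2\|\sim\|g\,|\,H^\alpha(\rn)\|$), then the identity \eqref{P1.1B}, and finally Plancherel's theorem,
\begin{equation}\label{P1.1C}
\|Ff\,|\,H^\alpha(\rn)\| \sim \|I_\alpha F f\,|\,L_2\| = \|F M_\alpha f\,|\,L_2\| \sim \|w_\alpha f\,|\,L_2\| \sim \|f\,|\,H^0(\rn,w_\alpha)\| .
\end{equation}
The two-sided estimate \eqref{P1.1C}, combined with the bijectivity of $F$ on $\mathcal S'(\rn)$, shows that $F$ maps $H^0(\rn,w_\alpha)$ isomorphically onto $H^\alpha(\rn)$.

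It then remains to remove the restriction $s=0$. By \eqref{P1.1A} one has $M_{-s}FI_s = M_{-s}M_s F = F$, that is
\begin{equation}\label{P1.1D}
F = M_{-s}\circ F\circ I_s .
\end{equation}
Reading \eqref{P1.1D} from right to left exhibits $F$ as a composition of three isomorphisms: the lift $I_s\colon H^s(\rn,w_\alpha)\to H^0(\rn,w_\alpha)$ is \eqref{1.4} (with $\beta=\alpha$); the map $F\colon H^0(\rn,w_\alpha)\to H^\alpha(\rn)$ is the axis case just established; and $M_{-s}\colon H^\alpha(\rn)\to H^\alpha(\rn,w_s)$ is the inverse of the multiplier isomorphism \eqref{1.3}. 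Their composition is precisely \eqref{1.7}, \eqref{1.8}. The point of routing $F$ through \eqref{P1.1D} is that it sidesteps the one genuine difficulty: a direct computation of the two norms gives $\|Ff\,|\,H^\alpha(\rn,w_s)\|\sim\|M_\alpha I_s f\,|\,L_2\|$ against $\|f\,|\,H^s(\rn,w_\alpha)\|\sim\|I_s M_\alpha f\,|\,L_2\|$, so that a naive proof would have to compare $M_\alpha I_s$ with $I_s M_\alpha$, i.e.\ to control the commutator of multiplication by $w_\alpha$ with the Bessel potential $I_s$. Passing through the axis case, where one index vanishes and the two operators commute trivially, makes this commutator estimate unnecessary.
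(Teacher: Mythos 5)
Your proof is correct and rests on exactly the same ingredients as the paper's own argument: the lifting \eqref{1.4}, the multiplier isomorphism \eqref{1.3}, Plancherel's theorem, and the (even-weight) commutation of $F$ with $M_\gamma$ and $I_\gamma$. The paper compresses this into a single chain of norm equivalences,
$\|\wh f\,|\,H^\alpha(\rn,w_s)\|\sim\|(w_\alpha f)^\wedge\,|\,L_2(\rn,w_s)\|\sim\|w_s(w_\alpha f)^\wedge\,|\,L_2(\rn)\|\sim\|w_\alpha f\,|\,H^s(\rn)\|\sim\|f\,|\,H^s(\rn,w_\alpha)\|$,
whose first step is precisely your intertwining relation $FM_\alpha=I_\alpha F$, so your explicit factorisation through the axis case is a repackaging of the same computation rather than a different route.
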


\begin{proof}
Let $f \in H^s (\rn, w_\alpha)$. Then it follows from \eqref{1.4} (with $L_2 (\rn, w_s)$ in place of $\Bs (\rn, w_\beta)$),
 $FL_2 (\rn) = L_2 (\rn)$, and \eqref{1.3} that
\begin{\eq}   \label{1.9}
\begin{aligned}
\| \wh{f} \, | H^\alpha (\rn, w_s) \| & \sim \|(w_\alpha f )^\wedge | L_2 (\rn, w_s) \| \\
&\sim \| w_s (w_\alpha f )^\wedge \, | L_2 (\rn) \| \\
 &\sim \| w_\alpha f \, | H^s (\rn) \| \\ &\sim
\| f \, | H^s (\rn, w_\alpha) \|.
\end{aligned}
\end{\eq}
Conversely, for any $g\in H^\alpha (\rn, w_s)$ there is an $f\in H^s (\rn, w_\alpha)$ with $\wh{f} =g$ and a  counterpart of 
\eqref{1.9}. This proves the proposition.
\end{proof}

\begin{remark}   \label{R1.2}
In particular,
\begin{\eq}   \label{1.10}
F H^s (\rn, w_s)  = H^s (\rn, w_s), \qquad s \in \real,
\end{\eq}
may be considered as the weighted extension of
\begin{\eq}   \label{1.11}
F L_2 (\rn) =L_2 (\rn), \qquad L_2 (\rn) = H^0 (\rn, w_0).
\end{\eq}
\end{remark}

Now it is quite clear that the role played by $L_2 (\rn)$ and $L_2 (\rn, w_\alpha)$, $\alpha >0$, in the theory of compact mappings of
$F$ between unweighted spaces as developed in \cite{T21}
is now taken over by $H^s (\rn, w_s)$, $s\in \real$, and $H^s (\rn, w_{s+\alpha})$ where again the degree of compactness is measured  in 
terms of entropy numbers as recalled in \cite[Definition 4.1]{T21} including related referenced to the literature. One may ask for
entropy numbers of compact mappings
\begin{\eq}  \label{1.12}
F: \quad B^{s_1}_p (\rn, w_{\alpha_1}) \hra B^{s_2}_p (\rn, w_{\alpha_2}),
\end{\eq}
for fixed weights, which means $\alpha_1 = \alpha_2$, for fixed smoothness,
which means $s_1 = s_2$, or for a mixture of both. But this will not be done here in detail. We add  now  a comment to the case of
fixed weights and shift  the more interesting task for fixed smoothness to the next section.

\begin{theorem}   \label{T1.3}
Let $-\infty < s_2 <s< s_1 <\infty$. Then
\begin{\eq}   \label{1.13}
F: \quad H^{s_1} (\rn, w_s ) \hra H^{s_2} (\rn, w_s)
\end{\eq}
is compact and
\begin{\eq}   \label{1.14}
e_k (F) \sim
\begin{cases}
k^{-\frac{\sigma_2}{n}} &\text{if $\sigma_2 < \sigma_1$}, \\
\big( \frac{k}{\log k} \big)^{-\frac{\sigma_2}{n}} &\text{if $\sigma_2 = \sigma_1$}, \\
k^{- \frac{\sigma_1}{n}} &\text{if $\sigma_2 > \sigma_1$},
\end{cases}
\end{\eq}
$2\le k \in \nat$, where $s_1 =s+ \sigma_1$ and $s_2 = s - \sigma_2$.
\end{theorem}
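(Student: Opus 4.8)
The plan is to factor $F$ through the isomorphism of Proposition~\ref{P1.1} so that the problem collapses to the entropy numbers of a purely weighted Sobolev embedding, and then to appeal to the (by now classical) sharp estimates for such embeddings. The three regimes in \eqref{1.14} should then emerge automatically from the interplay of the smoothness gain $\sigma_2$ and the weight exponent $\sigma_1$.

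First I would apply Proposition~\ref{P1.1} with $s$ replaced by $s_1$ and $\alpha$ replaced by $s$, which shows that
\[
F: \quad H^{s_1} (\rn, w_s) \hra H^s (\rn, w_{s_1})
\]
is an isomorphic mapping. Since $s_1 > s > s_2$, and hence $w_{s_1} \ge w_s$, one has the embedding $\id: H^s (\rn, w_{s_1}) \hra H^{s_2} (\rn, w_s)$, and the operator in \eqref{1.13} is exactly the composition of this embedding with the above isomorphism (both simply produce $\wh{f}$, read off in different target spaces). Because composition with an isomorphism leaves entropy numbers unchanged up to equivalence, it follows that
\[
e_k (F) \sim e_k \big( \id: H^s (\rn, w_{s_1}) \hra H^{s_2} (\rn, w_s) \big), \qquad 2 \le k \in \nat ,
\]
so that the compactness of $F$ in \eqref{1.13} is equivalent to that of this embedding.

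Next I would strip the weights from source and target by means of the isomorphisms \eqref{1.3}. Multiplication by $w_{s_1}$ maps $H^s (\rn, w_{s_1})$ isomorphically onto $H^s (\rn)$, and multiplication by $w_s$ maps $H^{s_2} (\rn, w_s)$ isomorphically onto $H^{s_2} (\rn)$. Under these two isomorphisms the embedding $\id$ is transformed into multiplication by $w_s w_{s_1}^{-1} = w_{-\sigma_1}$ from $H^s (\rn)$ into $H^{s_2} (\rn)$, which in turn is isomorphic, again via \eqref{1.3}, to the standard weighted embedding
\[
\id: \quad H^s (\rn, w_{\sigma_1}) \hra H^{s_2} (\rn), \qquad \sigma_1 = s_1 - s > 0 ,
\]
whose smoothness gain is $s - s_2 = \sigma_2 > 0$. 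Thus everything reduces to the entropy numbers of a compact embedding of a weighted Sobolev space into an unweighted one, with an independent smoothness parameter $\sigma_2$ and weight parameter $\sigma_1$.

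Finally I would invoke the sharp two-sided estimates for entropy numbers of such weighted embeddings, namely the theory underlying \cite{T21} and \cite{T06}, according to which the decay rate is governed by $\min (\sigma_1, \sigma_2)/n$: the smoothness wins when $\sigma_2 < \sigma_1$, giving $k^{-\sigma_2/n}$; the weight wins when $\sigma_2 > \sigma_1$, giving $k^{-\sigma_1/n}$; and a logarithmic correction $(k/\log k)^{-\sigma_2/n}$ appears in the balanced case $\sigma_1 = \sigma_2$. Matching the three regimes with the parameters $\sigma_1, \sigma_2$ yields precisely \eqref{1.14}. The genuine content of the theorem lies in the reductions of the first two paragraphs; the main obstacle is the delicate boundary case $\sigma_1 = \sigma_2$, where smoothness and weight contribute at the same scale and the extra $\log k$ factor must be extracted from a careful dyadic decomposition rather than from either competing polynomial rate alone.
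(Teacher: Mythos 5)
Your proposal is correct and follows essentially the same route as the paper: the identity $F H^{s_1}(\rn,w_s)=H^{s}(\rn,w_{s_1})$ from Proposition \ref{P1.1} reduces \eqref{1.13} to the embedding $\id: H^{s}(\rn,w_{s_1})\hra H^{s_2}(\rn,w_s)$, whose entropy numbers are then read off from the known two--sided estimates for weighted Sobolev embeddings in \cite{T21}. Your extra step of stripping the weights via \eqref{1.3} to arrive at $\id: H^{s}(\rn,w_{\sigma_1})\hra H^{s_2}(\rn)$ is exactly what the paper's phrase ``argue as there, relying on \cite[Proposition 4.5]{T21}'' compresses.
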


\begin{proof} 
By \eqref{1.8} one has
\begin{\eq}    \label{1.15}
F H^{s_1} (\rn, w_s) = H^s (\rn, w_{s_1} ).
\end{\eq}
This extends \cite[(4.37)]{T21} from $s=0$ to $s\in \real$. Then one can argue as there, relying now on \cite[Proposition 4.5]{T21}.
\end{proof}

\begin{remark}    \label{R1.4}
This extends \cite[Theorem 4.8(ii)]{T21} from $s=0$ to $s\in \real$. It is quite clear that there are related counterparts of 
\cite[Theorem 4.8(iii),(iv)]{T21} for the compact mappings
\begin{\eq}   \label{1.16}
F: \quad B^{s+ \sigma_1}_p (\rn, w_s) \hra B^{s-\sigma_2}_p (\rn, w_s), \qquad 1<p<\infty, \quad s\in \real,
\end{\eq}
with
\begin{\eq}   \label{1.17}
\begin{cases}
\sigma_1 >d^n_p, \ \sigma_2 >0 &\text{if $1<p \le 2$}, \\
\sigma_1 >0, \ \sigma_2 > |d^n_p| &\text{if $2\le p <\infty$},
\end{cases}
\end{\eq}
where as there
\begin{\eq}   \label{1.18}
d^n_p = 2n \big( \frac{1}{p} - \frac{1}{2} \big), \qquad n\in \nat, \quad 1<p<\infty.
\end{\eq}
\end{remark}

\section{Main assertions}    \label{S2}
So far we dealt in Theorem \ref{T1.3} and in the indicated generalizations in \eqref{1.16} with the same weight both in the source
spaces and in the target spaces. The outcome is apparently a rather straightforward generalization of corresponding assertions in
\cite{T21} for the unweighted spaces. The question arises what happens if the weights in the source spaces and in the target spaces
are different. For this purpose it seems to be reasonable to fix first not only the integrability parameter $p$ with $1<p<\infty$, but
also the smoothness $s\in \real$ and to ask, suggested by \eqref{1.10}, for compact mappings
\begin{\eq}   \label{2.1}
F: \quad B^s_p (\rn, w_{s+\alpha}) \hra B^s_p (\rn, w_{s-\beta}),
\end{\eq}
$1<p<\infty$, $s\in \real$, $\alpha >0$, $\beta >0$. First we deal with the case $p=2$ using the notation recalled in \eqref{1.6}.

\begin{proposition}   \label{P2.1}
Let $s\in \real$, $\alpha >0$ and $\beta >0$. Then
\begin{\eq}   \label{2.2}
F: \quad H^s (\rn, w_{s+\alpha} ) \hra H^s (\rn, w_{s-\beta} )
\end{\eq}
is compact and
\begin{\eq}   \label{2.3}
e_k (F) \sim
\begin{cases}
k^{-\frac{\alpha}{n}} &\text{if $\alpha < \beta$}, \\
\big( \frac{k}{\log k} \big)^{-\frac{\alpha}{n}} &\text{if $\alpha = \beta$}, \\
k^{- \frac{\beta}{n}} &\text{if $\alpha > \beta$},
\end{cases}
\end{\eq}
$2\le k \in \nat$.
\end{proposition}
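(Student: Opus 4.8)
The plan is to reduce \eqref{2.2} to an identity embedding between weighted spaces and then to quote the known entropy numbers of such embeddings, exactly along the lines of the proof of Theorem~\ref{T1.3}. The key input is the isomorphy \eqref{1.8} of Proposition~\ref{P1.1}. Applying it with weight parameter $s+\alpha$ in place of $\alpha$, keeping the smoothness $s$, yields
\[
F H^s (\rn, w_{s+\alpha}) = H^{s+\alpha} (\rn, w_s),
\]
so that $F$ maps $H^s (\rn, w_{s+\alpha})$ \emph{isomorphically} onto $H^{s+\alpha} (\rn, w_s)$.

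Consequently \eqref{2.2} factorizes as this isomorphism followed by the identity
\[
\id: \quad H^{s+\alpha} (\rn, w_s) \hra H^s (\rn, w_{s-\beta}),
\]
which is meaningful since the smoothness drops (from $s+\alpha$ to $s$) and the weight exponent drops (from $s$ to $s-\beta$). Because $F$ is an isomorphism, the entropy numbers are preserved up to equivalence constants, $e_k (F) \sim e_k (\id)$, and the compactness of \eqref{2.2} is equivalent to that of the embedding. As $\alpha >0$ and $\beta >0$, both smoothness and weight strictly decrease, so the embedding --- and hence \eqref{2.2} --- is compact.

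It then remains to read off $e_k (\id)$ from the standard estimates for entropy numbers of weighted Sobolev embeddings recalled in \cite[Proposition 4.5]{T21}. Here the smoothness difference equals $\alpha$ and the weight difference equals $\beta$; these now play the roles taken by $\sigma_2$ and $\sigma_1$, respectively, in Theorem~\ref{T1.3}. Substituting accordingly reproduces the three regimes of \eqref{2.3}, the logarithmic refinement occurring in the borderline case $\alpha = \beta$. The reduction itself is completely parallel to Theorem~\ref{T1.3}; the only point requiring care --- and the main, if minor, obstacle --- is the bookkeeping that matches $\alpha$ to the smoothness difference and $\beta$ to the weight difference, so that the correct parameter governs each of the three cases in \eqref{2.3}.
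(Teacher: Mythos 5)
Your proof is correct and follows essentially the same route as the paper: factor $F$ through the isomorphism $F H^s (\rn, w_{s+\alpha}) = H^{s+\alpha} (\rn, w_s)$ of Proposition~\ref{P1.1} and the compact embedding $\id: H^{s+\alpha} (\rn, w_s) \hra H^s (\rn, w_{s-\beta})$, whose entropy numbers are quoted from the known results in \cite{T21}; your matching of $\alpha$ to the smoothness difference and $\beta$ to the weight difference agrees with \eqref{2.6}, \eqref{2.7}. The only cosmetic difference is that the paper derives the embedding estimate from \cite[Corollary 4.7(ii)]{T21} via the isomorphisms \eqref{1.3} and \eqref{1.4} rather than citing \cite[Proposition 4.5]{T21} directly.
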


\begin{proof}
By \cite[Corollary 4.7(ii)]{T21} one has that
\begin{\eq}   \label{2.4}
\id: \quad L_2 (\rn, w_\alpha) \hra H^{-\sigma} (\rn)
\end{\eq}
with $\alpha >0$ and $\sigma >0$ is compact and
\begin{\eq}   \label{2.5}
e_k (F) \sim
\begin{cases}
k^{-\frac{\sigma}{n}} &\text{if $\sigma < \alpha$}, \\
\big( \frac{k}{\log k} \big)^{-\frac{\sigma}{n}} &\text{if $\sigma = \alpha$}, \\
k^{- \frac{\alpha}{n}} &\text{if $\sigma >\alpha$},
\end{cases}
\end{\eq}
$2\le k \in \nat$. Then it follows from the two isomorphisms \eqref{1.3} and \eqref{1.4} that
\begin{\eq}   \label{2.6}
\id: \quad H^{s+\alpha} (\rn, w_s) \hra H^s (\rn, w_{s-\beta} ), \qquad s\in \real,
\end{\eq}
with $\alpha >0$ and $\beta >0$ is compact and
\begin{\eq}   \label{2.7}
e_k (F) \sim
\begin{cases}
k^{-\frac{\alpha}{n}} &\text{if $\alpha <\beta$}, \\
\big( \frac{k}{\log k} \big)^{-\frac{\alpha}{n}} &\text{if $\alpha = \beta$}, \\
k^{- \frac{\beta}{n}} &\text{if $\alpha > \beta$},
\end{cases}
\end{\eq}
$2\le k \in \nat$. Now one obtains \eqref{2.3} from \eqref{2.6}, \eqref{2.7} and
\begin{\eq}   \label{2.8}
F H^s (\rn, w_{s+\alpha} ) = H^{s+\alpha} (\rn, w_s)
\end{\eq}
according to Proposition \ref{P1.1}.
\end{proof}

We extend Proposition \ref{P2.1} and ask for conditions ensuring that $F$ in \eqref{2.1} is compact. Let $d^n_p = 2n (\frac{1}{p} - \frac{1}{2})$
be as in \eqref{1.18}. 

\begin{theorem}   \label{T2.2}
{\em (i)} Let $1<p \le 2$, $s\in \real$ and $\alpha >0$, $\beta >0$. Then
\begin{\eq}   \label{2.9}
F: \quad B^s_p (\rn, w_{s+\alpha} ) \hra B^s_p (\rn, w_{s- d^n_p -\beta})
\end{\eq}
is compact and
\begin{\eq}   \label{2.10}
e_k (F) \le c \
\begin{cases}
k^{-\frac{\alpha}{n}} &\text{if $\alpha < \beta$}, \\
\big( \frac{k}{\log k} \big)^{-\frac{\alpha}{n}} (\log k)^{\frac{1}{p} - \frac{1}{2}} &\text{if $\alpha = \beta$}, \\
k^{- \frac{\beta}{n}} &\text{if $\alpha > \beta$},
\end{cases}
\end{\eq}
for some $c>0$ and all $2\le k \in \nat$. 
\cm
{\em (ii)} Let $2 \le p <\infty$, $s\in \real$ and $\alpha >0$, $\beta >0$. Then
\begin{\eq}   \label{2.11}
F: \quad B^s_p (\rn, w_{s+ |d^n_p| + \alpha} ) \hra B^s_p (\rn, w_{s-\beta})
\end{\eq}
is compact and
\begin{\eq}   \label{2.12}
e_k (F) \le c \
\begin{cases}
k^{-\frac{\alpha}{n}} &\text{if $\alpha < \beta$}, \\
\big( \frac{k}{\log k} \big)^{-\frac{\alpha}{n}} (\log k)^{\frac{1}{2} - \frac{1}{p}} &\text{if $\alpha = \beta$}, \\
k^{- \frac{\beta}{n}} &\text{if $\alpha > \beta$},
\end{cases}
\end{\eq}
for some $c>0$ and all $2\le k \in \nat$. 
\end{theorem}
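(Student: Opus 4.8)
The plan is to follow the pattern of the proof of Proposition~\ref{P2.1}: factorise $F$ into a bounded Fourier mapping between weighted Besov spaces followed by a compact identity embedding, and then read off the entropy numbers from the embedding by means of the multiplicativity (ideal) property $e_k (B\circ A) \le \|B\| \, e_k (A)$. Since for $p\neq 2$ the Fourier factor is only bounded and no longer an isomorphism as in Proposition~\ref{P1.1}, this route yields the one-sided estimates \eqref{2.10} and \eqref{2.12}, which is consistent with Theorem~\ref{T2.2} claiming only ``$\le$'' rather than the ``$\sim$'' of Proposition~\ref{P2.1}.

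The single analytic input beyond Proposition~\ref{P1.1} is a weighted Hausdorff--Young assertion: for $1<p\le 2$ the Fourier transform is bounded,
\[
F: \quad B^s_p (\rn, w_{s+\alpha}) \hra B^{s+\alpha}_p (\rn, w_{s-d^n_p}),
\]
while for $2\le p<\infty$ it is bounded,
\[
F: \quad B^s_p (\rn, w_{s+|d^n_p|+\alpha}) \hra B^{s+\alpha}_p (\rn, w_s).
\]
Both reduce to $p=2$ (Proposition~\ref{P1.1}) when $d^n_p=0$ and exhibit the same exchange of smoothness and weight. The shift by $d^n_p$ is exactly the loss produced by H\"older's inequality when converting the Hausdorff--Young gain $F:L_p\to L_{p'}$ (for $p\le2$) back to the same integrability $p$: one has $L_{p'}(\rn)\hra L_p(\rn,w_{-\gamma})$ whenever $\gamma>d^n_p$ with $d^n_p$ as in \eqref{1.18}, and dually $L_p(\rn,w_\mu)\hra L_{p'}(\rn)$ whenever $\mu>|d^n_p|$ for $p\ge2$. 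Lifting this from $L_p$ to the Besov scale by \eqref{1.3} and \eqref{1.4} gives the two displayed mappings.

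With these at hand, $F$ in \eqref{2.9} factorises through
\[
\id: \quad B^{s+\alpha}_p (\rn, w_{s-d^n_p}) \hra B^s_p (\rn, w_{s-d^n_p-\beta}),
\]
and $F$ in \eqref{2.11} through
\[
\id: \quad B^{s+\alpha}_p (\rn, w_s) \hra B^s_p (\rn, w_{s-\beta}).
\]
Stripping the common weight by the multiplication isomorphism \eqref{1.3}, both embeddings are isomorphic to the single compact embedding
\[
\id: \quad B^{s+\alpha}_p (\rn, w_\beta) \hra B^s_p (\rn),
\]
with smoothness drop $\alpha>0$ and weight exponent $\beta>0$. Its entropy numbers are the $B_p$-analogue of \cite[Corollary~4.7(ii)]{T21} (used for $p=2$ in Proposition~\ref{P2.1}) and are precisely the ingredient behind \cite[Theorem~4.8(iii),(iv)]{T21}: they equal $k^{-\alpha/n}$ if $\alpha<\beta$, $k^{-\beta/n}$ if $\alpha>\beta$, and $(k/\log k)^{-\alpha/n}(\log k)^{|1/p-1/2|}$ in the balanced case $\alpha=\beta$. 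Inserting these into $e_k(F)\le \|F\|\,e_k(\id)$ and recalling $|1/p-1/2|=d^n_p/(2n)$ for $p\le2$ and $|1/p-1/2|=|d^n_p|/(2n)$ for $p\ge2$ yields \eqref{2.10} and \eqref{2.12}; compactness follows since then $e_k(F)\to 0$.

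I expect the main obstacle to be the sharp weighted Hausdorff--Young step, that is, proving boundedness of the two displayed Fourier mappings with the exact threshold $d^n_p$ (respectively $|d^n_p|$) rather than a larger shift, and in particular controlling the borderline $\alpha=\beta$ where the extra factor $(\log k)^{|1/p-1/2|}$ appears. Everything else --- the reduction to the single weighted embedding via \eqref{1.3} and \eqref{1.4}, and the multiplicativity of entropy numbers --- is routine once the correct intermediate spaces have been identified.
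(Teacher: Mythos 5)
Your overall architecture (factor $F$ as a bounded Fourier map followed by a compact embedding, then use $e_k(B\circ A)\le\|B\|\,e_k(A)$) is reasonable, but the key lemma you rely on is exactly the step you flag as ``the main obstacle'', and it is a genuine gap, not a routine verification. Your weighted Hausdorff--Young mapping $F:\ B^s_p(\rn,w_{s+\alpha})\hra B^{s+\alpha}_p(\rn,w_{s-d^n_p})$ requires the weight shift to be \emph{exactly} $d^n_p$, whereas your own justification via H\"older (namely $L_{p'}(\rn)\hra L_p(\rn,w_{-\gamma})$) only works for $\gamma>d^n_p$ strictly; at $\gamma=d^n_p$ the relevant power of the weight just fails to be integrable. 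Retreating to a shift $d^n_p+\ve$ leaves only $\beta-\ve$ for the embedding step, which destroys the borderline case $\alpha=\beta$ entirely and degrades the rate to $k^{-(\beta-\ve)/n}$ when $\alpha>\beta$. A second unsupported step is the entropy-number formula you quote for the same-$p$ embedding $B^{s+\alpha}_p(\rn,w_\beta)\hra B^s_p(\rn)$ in the limiting case: the factor $(\log k)^{|1/p-1/2|}$ has no natural source in an embedding with $p_1=p_2$ --- in the known results that exponent is $1/p_2-1/p_1$ and arises precisely from a change of integrability --- so it appears to have been reverse-engineered from the desired conclusion rather than derived.

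The paper takes a different route that avoids both problems: it never uses any boundedness of $F$ outside $p=2$. For $1<p<2$ it first embeds $B^s_p(\rn,w_{s+\alpha})$ into $H^{s-n(\frac1p-\frac12)}(\rn,w_{s+\alpha})$ by the unweighted Sobolev-type embedding (paying half of $d^n_p$ in smoothness), then applies the exact isomorphism $F:H^{\sigma}(\rn,w_\tau)\to H^{\tau}(\rn,w_\sigma)$ of Proposition \ref{P1.1}, and finally invokes the compact embedding $H^{s+\alpha}(\rn,w_{s-n(\frac1p-\frac12)})\hra B^s_p(\rn,w_{s-\beta})$, whose entropy numbers are known from \cite[Proposition 4.5]{T21} for embeddings with $p_1=2\ne p_2=p$ --- which is exactly where the factor $(\log k)^{\frac1p-\frac12}$ legitimately comes from. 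Part (ii) then follows by duality. If you want to salvage your approach, you would have to actually prove the endpoint weighted Fourier mapping between $B^s_p$ spaces with shift $d^n_p$ (which is at least as hard as the theorem itself) and supply a correct limiting-case entropy estimate for the same-$p$ weighted embedding; as written, neither is available.
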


\begin{proof}
{\em Step 1.} The case $p=2$ is covered by Proposition \ref{P2.1} (even with equivalence instead of an estimate from above).
\cm
{\em Step 2.} Let $1<p<2$. In modification of \eqref{2.9} we ask first under which conditions $F$ in \eqref{2.1},
\begin{\eq}   \label{2.13}
F: \quad B^s_p (\rn, w_{s+\alpha} ) \hra B^s_p (\rn, w_{s-\beta})
\end{\eq}
is compact. By the isomorphism \eqref{1.3} and the well--known embedding for unweighted spaces we have the continuous embedding
\begin{\eq}   \label{2.14}
\id_1: \quad B^s_p (\rn, w_{s+\alpha}) \hra H^{s-n(\frac{1}{p} - \frac{1}{2})} (\rn, w_{s+\alpha}).
\end{\eq}
This shows, combined with with
\begin{\eq}   \label{2.15}
F H^{s- n(\frac{1}{p} - \frac{1}{2})} (\rn, w_{s+\alpha})  = H^{s+\alpha} (\rn, w_{s-n(\frac{1}{p} - \frac{1}{2})}),
\end{\eq}
covered by Proposition \ref{P1.1} that \eqref{2.13} can be reduced to the question under which conditions
\begin{\eq}   \label{2.16}
\id_2: \quad H^{s+ \alpha} (\rn, w_{s-n (\frac{1}{p} - \frac{1}{2})}) \hra B^s_p (\rn, w_{s-\beta})
\end{\eq}
is compact. For this purpose we specify \cite[Proposition 4.5, (4.7)--(4.14)]{T21} to $\id_2$. This requires in the notation used there $\alpha >0$,
\begin{\eq}   \label{2.17}
\delta= \alpha + n \big( \frac{1}{p} - \frac{1}{2} \big) = \alpha + \frac{1}{2} d^n_p, \qquad \vr = \frac{\delta}{n} >0,
\end{\eq}
and
\begin{\eq}   \label{2.18}
\frac{1}{p} < \frac{1}{p_*} = \frac{1}{2} + \frac{1}{n} \big( s - \frac{n}{p} + \frac{n}{2} -s + \beta \big) = 1 - \frac{1}{p} + \frac{\beta}{n}
\end{\eq}
resulting in $\beta > 2n (\frac{1}{p} - \frac{1}{2} ) = d^n_p$. Then it follows from \cite[Proposition 4.5]{T21} that $\id_2$ in \eqref{2.16} is 
compact. Replacing there $s_1$ by $s+\alpha$, $s_2$ by $s$, $\alpha$ by $\beta - n (\frac{1}{p} - \frac{1}{2}) = \beta - \frac{1}{2} d^n_p$, $p_1$
by $2$ and  $p_2$ by $p$ one obtains for the corresponding entropy numbers
\begin{\eq}   \label{2.19}
e_k (\id_2) \le c \
\begin{cases}
k^{-\frac{\alpha}{n}} &\text{if $\alpha + \frac{1}{2} d^n_p < \beta - \frac{1}{2} d^n_p$}, \\
\big( \frac{k}{\log k} \big)^{-\frac{\alpha}{n}} (\log k)^{\frac{1}{p} - \frac{1}{2}} &\text{if $\alpha  + \frac{1}{2} d^n_p= \beta - 
\frac{1}{2} d^n_p$}, \\
k^{- \frac{\beta}{n} + \frac{1}{p} - \frac{1}{2} + \frac{1}{p} - \frac{1}{2}} &\text{if $\alpha + \frac{1}{2} d^n_p > \beta - \frac{1}{2} d^n_p$}.
\end{cases}
\end{\eq}
Then \eqref{2.10} follows from \eqref{2.14}--\eqref{2.16} and \eqref{2.19} replacing there $\beta$ by $\beta + d^n_p$.
\cm
{\em Step 3.} Let $2<p<\infty$. As in Step 4 of the proof of \cite[Theorem 4.8]{T21} we rely on the duality
\begin{\eq}   \label{2.20}
B^s_p (\rn, w_\sigma)' = B^{-s}_{p'} (\rn, w_{-\sigma}), \qquad 1<p<\infty, \quad \frac{1}{p} + \frac{1}{p'} =1,
\end{\eq}
$s\in \real$, $\sigma \in \real$, in the framework of the dual pairing  $ \big( S(\rn), S' (\rn) \big)$. The isomorphism \eqref{1.3}
shows that also the weighted spaces $B^s_p (\rn, w_\sigma)$ are 
isomorphic to $\ell_p$. Then one can apply the duality theory for entropy numbers as described there. 
Using $d^n_{p'} = - d^n_p$ one obtains part (ii) of the above theorem by the indicated duality from part (i).
\end{proof}

\begin{remark}   \label{R2.3}
The outcome may justify to deal not only with unweighted spaces as in the Theorems \cite[Theorem 4.8]{T21} but also with the above weighted spaces for fixed $s\in \real$ and $p$ with $1<p<\infty$. The typical gap $d^n_p$ for the smoothness in the
unweighted case is now shifted to the weights.
\end{remark}

\begin{remark}   \label{R2.4}
If $1<p \le 2$, $\alpha \not= \beta$ and $0<q_\alpha, q_\beta \le \infty$ then the estimate \eqref{2.10} remains valid for
\begin{\eq}   \label{2.21}
F: \quad B^s_{p, q_\alpha} (\rn, w_{s+\alpha}) \hra B^s_{p, q_\beta} (\rn, w_{s- d^n_p -\beta} ).
\end{\eq}
Similarly for \eqref{2.11}, \eqref{2.12}. This follows by real interpolation in the same way as in \cite[Corollary 4.10]{T21}. 
For this purpose one should first shift the $s$--dependence for the weights to the exponents of the estimates for the corresponding
entropy numbers and use afterwards that the interpolation of $\Bs (\rn, w_\gamma)$ for a fixed weight $w_\gamma$ is the same as for 
their unweighted ancestors $\Bs (\rn)$.
\end{remark}

\section{Spectral theory}   \label{S3}
Let $K: \, B \hra B$ be a linear compact operator in a complex infinitely dimensional quasi--Banach space $B$. Let $e_k (B)$ be its
entropy numbers. Let $\{ \lambda_k (K)\}$ be the sequence of all non--zero eigenvalues of $K$, repeated according to their algebraic
multiplicity and naturally ordered by magnitude. We used in \cite{T22} Carl's observation
\begin{\eq}  \label{3.1}
|\lambda_k (K)| \le \sqrt{2} e_k (K), \qquad k\in \nat,
\end{\eq}
to study the distribution of eigenvalues for distinguished so--called Fourier operators based on corresponding assertions in 
\cite{T21}. Details, explanations and references may be found in \cite{T22}. This will not be repeated here. One can now use Theorem
\ref{T2.2} and other assertions obtained in this note to extend this theory to further classes of operators. This will not be done. But we
illustrate what can be expected by a simple example. Let as before $w_\sigma (x) = (1+|x|^2 )^{\sigma /2}$, $\sigma \in \real$, $x\in
\rn$. Recall that $H^0 (\rn, w_\sigma) = L_2 (\rn, w_\sigma)$. 
                                                                 
\begin{proposition}   \label{P3.1}
Let $\alpha >0$. Then $K_\alpha = w_{-\alpha} \circ F \circ w_{-\alpha}$ is a compact operator in $L_2 (\rn)$ and for some $c>0$,
\begin{\eq}   \label{3.2}
|\lambda_k (K_\alpha )| \le c \Big( \frac{k}{\log k} \Big)^{- \frac{\alpha}{n}}, \qquad 2 \le k \in \nat.
\end{\eq}
\end{proposition}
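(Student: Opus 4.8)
The plan is to deduce the eigenvalue bound \eqref{3.2} from a matching entropy-number estimate through Carl's inequality \eqref{3.1}, so that the real work is to control the entropy numbers $e_k(K_\alpha)$ of $K_\alpha$ acting in $L_2(\rn)$. My strategy is to recognize $K_\alpha$, up to composition with isomorphisms, as the Fourier transform acting between two weighted $L_2$-spaces, and then to quote Proposition \ref{P2.1} in its critical case.

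First I would factor the operator $K_\alpha = w_{-\alpha} \circ F \circ w_{-\alpha}$. Multiplication by $w_{-\alpha}$ produces, by the isomorphism \eqref{1.3}, two isomorphisms
\[ \iota_1: \quad L_2(\rn) \to L_2(\rn, w_\alpha), \quad f \mapsto w_{-\alpha} f, \]
\[ \iota_2: \quad L_2(\rn, w_{-\alpha}) \to L_2(\rn), \quad h \mapsto w_{-\alpha} h, \]
since $\| w_{-\alpha} f \, | L_2(\rn, w_\alpha) \| \sim \| f \, | L_2(\rn) \|$ and $\| w_{-\alpha} h \, | L_2(\rn) \| \sim \| h \, | L_2(\rn, w_{-\alpha}) \|$. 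Writing $L_2(\rn, w_\gamma) = H^0(\rn, w_\gamma)$, one checks directly that
\[ K_\alpha = \iota_2 \circ F \circ \iota_1, \qquad F: \quad H^0(\rn, w_\alpha) \to H^0(\rn, w_{-\alpha}). \]
As $\iota_1, \iota_2$ are isomorphisms and entropy numbers are stable, up to constants, under composition with isomorphisms ($e_k(\iota_2 T \iota_1) \le \| \iota_2 \| \, e_k(T) \, \| \iota_1 \|$ and conversely), this yields $e_k(K_\alpha) \sim e_k\big(F: H^0(\rn, w_\alpha) \to H^0(\rn, w_{-\alpha})\big)$; in particular $K_\alpha$ inherits compactness.

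Next I would apply Proposition \ref{P2.1} with $s = 0$ and $\beta = \alpha$: the source weight $w_{s+\alpha}$ becomes $w_\alpha$ and the target weight $w_{s-\beta}$ becomes $w_{-\alpha}$, so the map coincides with the one above and falls into the critical case $\alpha = \beta$ of \eqref{2.3}. Hence it is compact with $e_k(F) \sim (k/\log k)^{-\alpha/n}$ for $2 \le k \in \nat$, which gives $e_k(K_\alpha) \le c\,(k/\log k)^{-\alpha/n}$. Feeding this into Carl's inequality \eqref{3.1}, $|\lambda_k(K_\alpha)| \le \sqrt{2}\, e_k(K_\alpha)$, produces precisely \eqref{3.2}.

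The argument carries no deep obstacle, since its substance is borrowed entirely from Proposition \ref{P2.1} and Carl's inequality. The one point requiring care is the bookkeeping around the two multiplication maps: one must confirm that $\iota_1$ and $\iota_2$ are genuine isomorphisms, so that the entropy numbers transfer without loss, and match the parameters so as to land exactly in the logarithmic case $\alpha = \beta$ of \eqref{2.3}, which is what forces the characteristic factor $(k/\log k)^{-\alpha/n}$ rather than a pure power of $k$.
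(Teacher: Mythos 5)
Your argument is correct and coincides with the paper's own proof: the same factorization of $K_\alpha$ through the two multiplication isomorphisms $f\mapsto w_{-\alpha}f$ and the Fourier transform $F\colon L_2(\rn,w_\alpha)\hra L_2(\rn,w_{-\alpha})$, followed by Proposition \ref{P2.1} with $s=0$, $\beta=\alpha$ (the critical logarithmic case) and Carl's inequality \eqref{3.1}. The only difference is that you spell out the bookkeeping for the isomorphisms and the transfer of entropy numbers, which the paper leaves implicit.
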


\begin{proof}
One can decompose $K_\alpha$ into the two isomorphic mappings $f \mapsto w_{-\alpha} f$ from $L_2 (\rn)$ onto $L_2 (\rn, w_\alpha)$
and from $L_2 (\rn, w_{-\alpha})$ onto $L_2 (\rn)$ combined with
\begin{\eq}   \label{3.3}
F: \quad L_2 (\rn, w_\alpha) \hra L_2 (\rn, w_{-\alpha} ).
\end{\eq}
Then \eqref{3.2} follows from \eqref{3.1} and Proposition \ref{P2.1} with $s=0$ and $\alpha = \beta >0$.
\end{proof}

\begin{remark}   \label{R3.2}
As said, Proposition \ref{P3.1} may serve as an example which type of assertions can be expected if one deals more systematically
with problems of this type based on \cite{T21}, \cite{T22}.
\end{remark}


\begin{thebibliography}{iiiiii}

\bibitem{T06} H. Triebel. Theory of function spaces III. Birkh\"{a}user, Monographs Math. 100, Basel, 2006. 

\bibitem{T21} H. Triebel. Mapping properties of Fourier transforms. Submitted, arXiv:2112.04896 (2021).

\bibitem{T22} H. Triebel. Mapping properties of pseudodifferential and Fourier operators. arXiv:2201.06261 (2022)

\end{thebibliography}
\end{document}